\theoremstyle{plain}
\newtheorem{thm}{Theorem}[section]
\newtheorem{prop}[thm]{Proposition}
\newtheorem{rmk}[thm]{Remark}
\newcommand{\bQ}{\overline{\mathbb{Q}}}
\newcommand{\C}{\mathbb{C}}
\newcommand{\R}{\mathbb{R}}
\newcommand{\Q}{\mathbb{Q}}
\newcommand{\Z}{\mathbb{Z}}
\newcommand{\F}{\mathbb{F}}
\newcommand{\lra}{\longrightarrow}
\newcommand{\bH}{\mathbb{H}}
\renewcommand{\O}{\mathcal{O}}
\newcommand{\ds}{\displaystyle}
\newcommand{\G}{\Gamma}
\newcommand{\SL}{{\rm SL}}
\newcommand{\e}{\varepsilon}
\newcommand{\bs}{\backslash}
\title[] 
{Computing algebraic Belyi functions on Bring's curve}
\author{Madoka Horie and Takuya Yamauchi}
\keywords{Belyi functions, Bring's curve, Hulek-Craig's curve, modular curves, triangular Shimura curves}
\thanks{}
\subjclass[2010]{}
\address{Madoka Horie \\
Graduate School of Science and Technology, Sophia University\\
 7-1 Kioi-cho, Chiyoda-ku, Tokyo, 102-8554, JAPAN}
\email{horiemaaa@gmail.com}
\address{Takuya Yamauchi \\ 
Mathematical Inst. Tohoku Univ.\\
 6-3,Aoba, Aramaki, Aoba-Ku, Sendai 980-8578, JAPAN}
\email{takuya.yamauchi.c3@tohoku.ac.jp}
\begin{document}

\maketitle

\begin{abstract}
In this paper, we explicitly compute two kinds of algebraic Belyi functions on Bring's curve. 
One is related to a congruence subgroup of $\SL_2(\Z)$ and the other is related to a 
congruence subgroup of the triangle group $\Delta(2,4,5)\subset \SL_2(\R)$. To carry out the computation, we use elliptic cusp forms of weight 2 for the former case and the automorphism group of Bring's curve for the latter case. We also discuss a suitable base 
field (a number field) for describing isomorphisms between Hulek-Craig's curve, Bring's curve, and another algebraic model obtained as a modular curve. 
\end{abstract}

\section{Introduction}
The main result of Belyi \cite{Belyi} shows that a complex projective smooth curve $C$ has a 
model, as an algebraic curve, defined over a number field if and only if there exists a non-constant function $\beta:C\lra \mathbb{P}^1$ 
ramified at most over $\{0,1,\infty\}$. In this paper, we call such a function 
$\beta$ an algebraic Belyi function and more precisely, an algebraic Belyi function 
defined over a number field $K$ if it is defined over $K$. 
Thus, any projective smooth geometrically connected curve over a number field is endowed with 
an algebraic Belyi function. However, finding such a $\beta$ explicitly for a given $C$ is another matter. In fact, 
the explicit computation of algebraic Belyi functions has been studied extensively
(cf. \cite{SS},\cite{SV} among others). 
Belyi's result can also be understood via hyperbolic uniformization and 
there are two types of uniformization:
\begin{enumerate}
\item (Non-compact case) There exists a finite index subgroup $\G$ of $\SL_2(\Z)$ 
and a biholomorphism $X(\G):=\G\bs (\mathbb{H}\cup\mathbb{P}^1(\Q))\simeq C(\C)$ such that 
$\beta$ is interpreted as the natural projection $$X(\G)\lra X(\SL_2(\Z))\stackrel{\sim}{\lra} \mathbb{P}^1(\C)
,\ \G\tau\mapsto \SL_2(\Z)\tau\mapsto \frac{1}{1728}j(\tau)$$  
where $j$ is Klein's $j$-invariant function, with expansion $j(\tau)=q^{-1}+744+196884q+\cdots,\ 
q:=e^{2\pi \sqrt{-1}\tau},\ \tau\in\mathbb{H}:=\{\tau\in\C\ |\ {\rm Im}(\tau)>0\}$. 
\item (Compact case) There exists a finite index subgroup $\Delta$ of 
some triangle group $\Delta(a,b,c)\subset \SL_2(\R)$  
and a biholomorphism $X_\Delta:=\Delta\bs\mathbb{H}\simeq C(\C)$ such that 
$\beta$ is interpreted as the natural projection $$X_\Delta\lra X_{\Delta(a,b,c)}
\stackrel{\sim}{\lra} \mathbb{P}^1(\C)
,\ \Delta\tau\mapsto \Delta(a,b,c)\tau\mapsto j_{\Delta(a,b,c)}(\tau)$$
where $j_{\Delta(a,b,c)}$ is a uniformizer of $X_{\Delta(a,b,c)}$. 
\end{enumerate}
We refer to \cite[p.71, Theorem]{Se97} for the non-compact case and 
\cite[Section 3.3, Theorem 3]{Wolfart} for the compact case.

In this paper, using the above two interpretations, we compute two algebraic Belyi functions on Bring's curve defined by
\begin{equation}\label{Bring}
B:\left\{
\begin{array}{l}
x_0+x_1+x_2+x_3+x_4=0 \\
x^2_0+x^2_1+x^2_2+x^2_3+x^2_4=0 \\
x^3_0+x^3_1+x^3_2+x^3_3+x^3_4=0 
\end{array}\right.
\end{equation}
inside $\mathbb{P}^4$ with homogeneous coordinates $[x_0:x_1:x_2:x_3:x_4]$. 
The symmetric group $S_5$ of degree 5 acts on $B$ by permutation of coordinates and it is known that $S_5$ is 
the full automorphism group of $B$. We refer to \cite[Example 5.5 and Proposition 5.6]{Z} for another aspect of Bring's curve. 
  
A key ingredient in computing the Belyi maps is to relate Bring's curve with a certain modular curve and Hulek-Craig's curve. We briefly introduce 
these curves and the details are given in the corresponding sections.  
Let $X(\G_B)^{{\rm alg}}$ be the algebraic model over $\Q$ of the compactified modular curve $X(\G_B):=\overline{\G_B\bs \mathbb{H}}$ 
which is given by 
\begin{equation}\label{XBalg}
X(\G_B)^{{\rm alg}}:\left\{
\begin{array}{l}
-(x^2+y^2)+2(z^2+w^2)=0 \\
x^3+xy^2+2y^3-4xz^2-8xzw=0
\end{array}\right.
\end{equation}
inside $\mathbb{P}^4$ (see Section \ref{mc}). Here $\G_B:=\Gamma_0(2)\cap \G(5)$ is a congruence subgroup in $\SL_2(\Z)$. 
Let $\beta^{{\rm alg}}_{\G_B}:X(\G_B)^{{\rm alg}}\lra \mathbb{P}^1_\Q$ be an algebraic model over $\Q$ of 
$X(\G_B)\lra X(\SL_2(\Z))\simeq \mathbb{P}^1(\C), \tau\mapsto \frac{1}{1728}j$ 
(see Theorem \ref{belyi1}). 
 
On the other hand, Hulek-Craig's curve $X^{{\rm HC}}$ over $\Q$ is defined by the desingularization of a projective geometrically irreducible singular curve $$F(x_1,y_1,z_1):=x_1(y^5_1+z^5_1)+x^2_1y^2_1z^2_1-x^4_1y_1z_1-2y_1^3z^3_1=0$$
inside $\mathbb{P}^2$ with the coordinates $[x_1:y_1:z_1]$. This curve has studied in \cite{H} and \cite{C}. 
Hulek-Craig's curve $X^{{\rm HC}}$ appears in the context of the Horrocks-Mumford
bundle \cite{H}. Through its symmetries, the latter is also closely related
to the group $\G(5)$.

Then we prove the following result:
\begin{thm}\label{submain}{\rm (}Theorem \ref{isom}{\rm)} Keep the notation as above. Let $\zeta_5$ be a primitive fifth root of unity.  
\begin{enumerate}
\item Bring's curve $B$ is isomorphic over $\Q(\zeta_5)$ to Hulek-Craig's curve $X^{{\rm HC}}$;
\item  Hulek-Craig's curve $X^{{\rm HC}}$ is isomorphic over $\Q$ to $X(\G_B)^{{\rm alg}}$. 
\end{enumerate}
\end{thm}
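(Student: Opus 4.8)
The plan is to prove both isomorphisms by exhibiting explicit coordinate changes and then invoking the fact that a birational morphism between smooth projective curves is an isomorphism; since $B$, $X^{{\rm HC}}$, and $X(\G_B)^{{\rm alg}}$ are all smooth of genus $4$, it suffices in each case to write down a birational map and check that it is everywhere defined, equivalently to produce an explicit inverse.

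For part (1), the natural first step is to diagonalize the cyclic subgroup $\langle\sigma\rangle\cong\Z/5\Z$ of $S_5$ generated by $\sigma:x_i\mapsto x_{i+1}$ (indices mod $5$). I would pass to the discrete Fourier coordinates $u_j:=\sum_{i=0}^{4}\zeta_5^{\,ij}x_i$ with $j\in\Z/5\Z$, under which $\sigma$ acts diagonally by $u_j\mapsto\zeta_5^{-j}u_j$; this is precisely the step that forces the base field to contain $\zeta_5$. The linear equation $\sum_i x_i=0$ becomes $u_0=0$, and rewriting the power-sum relations $\sum_i x_i^2=0$ and $\sum_i x_i^3=0$ via the orthogonality $\sum_i\zeta_5^{im}=5\,\delta_{m\equiv 0}$ yields a clean model of $B$ in $\mathbb{P}^3$ with coordinates $[u_1:u_2:u_3:u_4]$, cut out by one quadric and one cubic that are eigenvectors (semi-invariants) for the diagonal $\Z/5\Z$-action. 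The Hulek-Craig equation $F$ carries the matching action $[x_1:y_1:z_1]\mapsto[x_1:\zeta_5 y_1:\zeta_5^{-1}z_1]$, so I would construct the birational correspondence between $B$ and $X^{{\rm HC}}$ as the $\langle\sigma\rangle$-equivariant map sending the $u_j$ to forms in $x_1,y_1,z_1$ of the correct weight (with $y_1,z_1$ of weights $\pm1$ and $x_1$ of weight $0$), using the quadric relation to balance degrees; substituting into $F$ should reduce it to the cubic relation, and exhibiting the inverse confirms the isomorphism over $\Q(\zeta_5)$.

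For part (2), the target field is $\Q$, so the Fourier transform is unavailable and a different mechanism is needed: I would exploit the common modular origin of the two models. Both $X^{{\rm HC}}$ and $X(\G_B)^{{\rm alg}}$ are genus-$4$ models attached to the level structure at $5$ coming from $\G(5)$ (for $X^{{\rm HC}}$ through the Horrocks-Mumford bundle, for $X(\G_B)^{{\rm alg}}$ through the congruence subgroup $\G_B=\Gamma_0(2)\cap\G(5)$), so one expects a genuinely $\Q$-rational identification. Concretely, I would search for a $\Q$-rational change of the plane coordinates $[x_1:y_1:z_1]$ together with the coordinates $[x:y:z:w]$ of \eqref{XBalg} matching the quadric $-(x^2+y^2)+2(z^2+w^2)=0$ and the accompanying cubic against $F=0$, verify the correspondence by direct substitution, and produce the explicit inverse.

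The main obstacle I anticipate is purely the problem of \emph{finding} the two explicit coordinate dictionaries: the monomial map in part (1) and, especially, the $\Q$-rational map in part (2), which must avoid $\zeta_5$ entirely and so cannot simply be read off from the Fourier picture. Once the correct maps are written down, the verifications are finite substitutions, and smoothness of all three curves upgrades birationality to isomorphism; the only conceptual point to note is that part (1) intrinsically requires $\zeta_5$, but since the statement only asserts an isomorphism over $\Q(\zeta_5)$, exhibiting one such map suffices.
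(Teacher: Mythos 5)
Your overall framework (write down explicit maps, verify by direct substitution, and use smoothness of genus-$4$ curves to upgrade birationality to isomorphism) is the same as the paper's, and your sketch of part (1) via the discrete Fourier transform diagonalizing the $\Z/5\Z$-action is a reasonable outline of how such an isomorphism is built; the paper itself does not reprove this part but simply quotes \cite[Proposition 2.9]{BDH}, so there is little to compare there beyond noting that you have not actually carried out the substitution you describe.

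Part (2) is where there is a genuine gap. Your plan is to ``search for a $\Q$-rational change of the plane coordinates'' matching $F=0$ against the quadric and cubic defining $X(\G_B)^{\rm alg}$ and then verify it. But the map is not a linear change of coordinates (the paper's $\psi$ is given by four cubic forms in $x_1,y_1,z_1$), so the search space is large, and you give no mechanism that produces the map or even guarantees that a $\Q$-rational isomorphism exists. Existence over $\Q$ is precisely the nontrivial content here: curves isomorphic over $\bQ$ need not be isomorphic over $\Q$, and the paper stresses that $B$ and $X(\G_B)^{\rm alg}$ are \emph{not} $\Q$-isomorphic (their Jacobians are not even $\Q$-isogenous, and $B(\Q)=\emptyset$), so the heuristic ``common modular origin, hence one expects a $\Q$-rational identification'' cannot be taken for granted. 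The paper's actual mechanism, which is the idea missing from your proposal, is the canonical embedding: both curves are non-hyperelliptic of genus $4$, $X(\G_B)^{\rm alg}$ is canonically embedded via the cusp forms $f_1,\dots,f_4$, and the holomorphic differentials $v_1,\dots,v_4$ on $X^{\rm HC}$ (from \cite[Lemma 2.6]{BDH} together with Craig's theta-function parametrization of $x_1,y_1,z_1$) can be expanded in $q$ and written as $(f_1,\dots,f_4)A\,dq$ for an explicit change-of-basis matrix $A$ with entries in $\Q$. The isomorphism $\psi$ is then read off as the induced map between canonical models, and its $\Q$-rationality is exactly the rationality of $A$. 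Without this step your part (2) has no way to terminate.
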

We note that the first part of Theorem \ref{submain} is due to \cite[Proposition 2.9]{BDH}.
The main results of this paper are Theorem \ref{belyi1} and Theorem \ref{belyi2} and they are summarized as follows: 
\begin{thm}\label{main} Bring curve has two algebraic Belyi functions which are given by the following contexts:  
\begin{enumerate}
\item {\rm (}non-compact type{\rm) }The composition of $B\stackrel{\Q(\zeta_5)}{\simeq}X^{{\rm HC}}
\stackrel{\Q}{\simeq} X(\G_B)^{{\rm alg}}$ and $\beta^{{\rm alg}}_{\G_B}:X(\G_B)^{{\rm alg}}\lra \mathbb{P}^1_\Q$ gives an algebraic Belyi map over $\Q(\zeta_5)$ on $B$. 
\item  {\rm (}compact type{\rm) } The natural quotient $B\lra B/S_5$ gives an algebraic Belyi map over $\Q$ on $B$. 
It is an algebraic model over $\Q$ of the natural quotient map $X_\Delta\lra X_{\Delta(3,4,5)}\simeq \mathbb{P}^1(\C)$ induced by the inclusion of a certain subgroup $\Delta$ of $\Delta(3,4,5)$. 
\end{enumerate}

\end{thm}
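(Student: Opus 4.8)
The plan is to obtain both Belyi maps by feeding the isomorphisms of Theorem~\ref{submain} and the automorphism action into the two uniformization pictures recalled in the introduction, so that each assertion is reduced to the statement that a certain projection is branched over at most three points.

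For part (1) I would first compose the two isomorphisms of Theorem~\ref{submain}, namely $B\simeq X^{\mathrm{HC}}$ over $\Q(\zeta_5)$ and $X^{\mathrm{HC}}\simeq X(\G_B)^{\mathrm{alg}}$ over $\Q$, to obtain an isomorphism $B\xrightarrow{\sim}X(\G_B)^{\mathrm{alg}}$ defined over $\Q(\zeta_5)$; the field $\Q(\zeta_5)$ is forced by the first isomorphism and is the reason the resulting map is only claimed over $\Q(\zeta_5)$. It then remains to check that $\beta^{\mathrm{alg}}_{\G_B}$ is itself a Belyi map over $\Q$, since the Belyi property is preserved under precomposition with an isomorphism. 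By construction $\beta^{\mathrm{alg}}_{\G_B}$ is an algebraic model over $\Q$ of the modular projection $X(\G_B)\to X(\SL_2(\Z))\cong\mathbb{P}^1$, $\tau\mapsto\tfrac{1}{1728}j(\tau)$, so I would invoke the standard fact (the non-compact case recalled in the introduction) that the natural map $\G_B\backslash\mathbb{H}^*\to\SL_2(\Z)\backslash\mathbb{H}^*$ is unramified away from the two elliptic points $j=0,\,1728$ and the cusp $j=\infty$. Under $\tfrac{1}{1728}j$ these three points are sent precisely to $0,1,\infty$, so the branch locus of $\beta^{\mathrm{alg}}_{\G_B}$ lies in $\{0,1,\infty\}$; this part is then formal.

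For part (2) I would work directly with the $S_5$-action, which permutes the coordinates $[x_0:\cdots:x_4]$ and is therefore defined over $\Q$, so that $\pi\colon B\to B/S_5$ is a morphism of smooth projective curves over $\Q$. To locate the branching I would compute the subvarieties of $B$ fixed by elements of $S_5$ of each order: one expects exactly three $S_5$-orbits of points with nontrivial stabilizer, with cyclic stabilizers of orders $5$, $4$ and $2$ (fixed points of a $5$-cycle, a $4$-cycle and a transposition, respectively) and no further ramification. Riemann--Hurwitz then reads $-6=120\,(2g(B/S_5)-2)+\sum(e_P-1)$, forcing $g(B/S_5)=0$ and exactly three branch points; equivalently the orbifold Euler characteristic identity $-6=120\cdot(-1+\tfrac12+\tfrac14+\tfrac15)$ pins the signature to $(2,4,5)$. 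This matches the triangle group $\Delta(2,4,5)$ of the introduction, so via the uniformization $B(\C)\cong\Delta\backslash\mathbb{H}$ with $\Delta\triangleleft\Delta(2,4,5)$ and $\Delta(2,4,5)/\Delta\cong\mathrm{Aut}(B)=S_5$ the map $\pi$ is an algebraic model of the orbifold projection $X_\Delta\to X_{\Delta(2,4,5)}\cong\mathbb{P}^1(\C)$.

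It remains to descend this to $\Q$ and normalize the branch points. The branch locus of $\pi$ is a $\mathrm{Gal}(\bQ/\Q)$-stable set of three points of $(B/S_5)(\bQ)$; since their ramification indices $2,4,5$ are pairwise distinct, each Galois element must fix each branch point individually, so all three are $\Q$-rational. As $B/S_5$ is then a genus-$0$ curve over $\Q$ with a rational point, it is $\Q$-isomorphic to $\mathbb{P}^1_\Q$, and I would choose the isomorphism sending the three branch points to $0,1,\infty$; composing with $\pi$ yields an algebraic Belyi map over $\Q$. I expect the main obstacle to be the explicit part underlying Theorems~\ref{belyi1} and~\ref{belyi2}: producing the rational model $\beta^{\mathrm{alg}}_{\G_B}$ from weight-$2$ cusp forms and exhibiting the three fixed loci of the $S_5$-action concretely, rather than the formal deduction of the Belyi property sketched above.
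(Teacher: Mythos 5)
Your argument is correct and its skeleton matches the paper's: Theorem \ref{main} is presented there as a summary of Theorem \ref{isom} (the isomorphisms $B\simeq X^{\mathrm{HC}}\simeq X(\G_B)^{\mathrm{alg}}$), Theorem \ref{belyi1}, Theorem \ref{belyi2}, and the two uniformization facts recalled in the introduction, which is exactly how you reduce the two assertions. The one genuinely different step is your descent argument in part (2). The paper pins down $\mathrm{Aut}_\Q(B)=S_5$ by the sandwich $S_5\subset \mathrm{Aut}_\Q(B)\subset \mathrm{Aut}_\C(B(\C))\simeq S_5$ and then justifies $B/S_5\simeq\mathbb{P}^1_\Q$ only by explicitly exhibiting the $S_5$-invariant function $t$ of (\ref{t}) and checking $[\Q(B):\Q(t)]=120$ via a Gr\"obner-basis degree count; your observation that the three branch points carry pairwise distinct ramification indices $2,4,5$, hence are individually Galois-fixed and $\Q$-rational, yields the $\Q$-rationality of the quotient and the normalization to $\{0,1,\infty\}$ with no computation at all. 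What this abstract route does not produce is precisely what the paper regards as its main content: the explicit formulas (the canonical model of $X(\G_B)$ from weight-$2$ cusp forms, the passage through the Hibino--Murabayashi model of $X_0(50)$, the isomorphism (\ref{HC}), and the closed form of $t$), which you correctly flag as the remaining work rather than claim to have done. For part (1) your reasoning coincides with the paper's, which cites Filom for the fact that $X(\G_B)\to X(\SL_2(\Z))$ ramifies only over the elliptic points and the cusp. Finally, your Riemann--Hurwitz count correctly gives the signature $(2,4,5)$, in agreement with the abstract and Section \ref{tc}; the group $\Delta(3,4,5)$ in the statement of Theorem \ref{main} and in the introduction is a typo, since $\frac13+\frac14+\frac15\neq\frac{19}{20}$.
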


Let us explain about some key points for the main results. 
It is known that 
\begin{enumerate}
\item (non-compact case  \cite[p.82-83]{H}) $B(\C)\simeq X(\G_B)$ as a Riemann surface, and  
\item (compact case  \cite[p.73]{CV}) $B(\C)\simeq \Delta\bs \bH$ for some normal congruence subgroup $\Delta$ of $\Delta(3,4,5)$ such that 
$\Delta(3,4,5)/\Delta \simeq  S_5$. Thus,  
${\rm Aut}(B(\C))\simeq {\rm Aut}(\Delta\bs \bH)= \Delta(3,4,5)/\Delta \simeq  S_5$.  
\end{enumerate}

For the non-compact case, we compute an algebraic model of the $j$-function in terms of the modular functions 
on $X(\G_B)$ by regarding $j$ and modular functions with algebraic coordinates. 

For the compact case, we first use the uniformization to conclude that the natural quotient map 
$B(\C)\mapsto B(\C)/{\rm Aut}_{\C}(B(\C))$ is an algebraic Belyi function. 
As a projective smooth complex curve, Bring's curve is the unique curve of genus 4 with $S_5$ as the automorphism group. On the other hands, $S_5$ acts properly on the coordinates of $B$ and 
if we view $B$ as an algebraic curve over $\Q$, we have ${\rm Aut}_\Q(B)\supset S_5$ where 
${\rm Aut}_\Q$ stands for the group of all automorphims defined over $\Q$. However, 
$S_5\subset {\rm Aut}_\Q(B)\subset  {\rm Aut}_\C(B(\C))\simeq S_5$. Thus, ${\rm Aut}_\Q(B)=S_5$. 
Hence, the natural quotient map decent to the quotient map 
$B\lra B/{\rm Aut}_\Q(B)\simeq \mathbb{P}^1_\Q$ which is an algebraic Belyi function defined over $\Q$. 

We will organize this paper as follows. 
In Section \ref{mc}, we study another model of $B$ which is explicitly given as an algebraic model of  the modular curve $X(\G_B)$. 
By using elliptic cusp forms of weight 2 and the theory of canonical curves, we compute 
an explicit defining equation over $\Q$ for $X(\G_B)$. 
In Section \ref{tc}, we will compute the quotient map 
$B\mapsto B/{\rm Aut}_{\Q}(B)$ explicitly. 
Finally, in Section \ref{tab},  we relate our algebraic model of $X(\G_B)$ with Hulek-Craig's curve 
and discuss a number field $K$ such that these curves are 
isomorphic over $K$ to Bring's curve. 
This shows that Bring's curve inherits two interesting algebraic Belyi functions. 

\textbf{Acknowledgments.} We would like to thank Professors K.~Hulek and H.~Braden 
for helpful discussions. We also thank Natalia Amburg for kindly informing us an article \cite{Z}. 
Finally, we would like to give special thanks to the referee, whose suggestions have greatly improved the presentation and readability of this paper.

\section{Bring's curve as a modular curve}\label{mc}
In this section, we compute the Belyi function on Bring's curve $B$ as 
the modular curve $X(\G_B)$ defined below. 
It will be explained in Section \ref{tab} that 
$B$ is isomorphic over $\Q(\zeta_5)$ to an algebraic model over $\Q$ of $X(\G_B)$. Thus, we may work on $X(\G_B)$ to compute the Belyi function. 

We refer to \cite{DS} for the basic facts of elliptic modular forms and modular curves. 
For each positive integer $N$, we denote by $\G(N)\supset \G_0(N)\supset \G_1(N)$ the three kinds of congruence subgroups inside $\SL_2(\Z)$ introduced in \cite[p.13, Definition 1.2.1]{DS}. For such a congruence subgroup $\G$, we define the 
(compact) modular curve $X(\G):=\G\bs (\mathbb{H}\cup\mathbb{P}^1(\Q))$ 
and put 
$X_0(N)=X(\G_0(N))$
 for simplicity.  
We denote by $S_2(\G)$ the space of all cusp forms of weight 2 with respect to $\G$. 
It is known that we have an isomorphism $S_2(\G)\stackrel{\sim}{\lra}H^0(X(\G),\Omega^1_{X(\G)}),\ f(\tau)\mapsto f(\tau)d\tau$ where the right hand side stands for the space of 
all holomorphic 1-forms on $X(\G)$. 
\subsection{Defining equations over $\Q$ of $X(\G_B)$}
Let 
$$\G_B=\Bigg\{
\left(
\begin{array}{cc}
a & b \\
c & d
\end{array}\right)\in \SL_2(\Z)\ \Bigg|\ a\equiv d\equiv 1\ {\rm mod}\ 5,\ c\equiv 0\ {\rm mod}\ 50
\Bigg\}
\subset \G_1(5)\cap \G_0(50).$$
As explained in \cite[Section 2]{BDH}, $B(\C)$ is isomorphic to $X(\G_B)$ as a compact Riemann surface. 
Notice that $\left(
\begin{array}{cc}
5^{-1} & 0 \\
0 & 1
\end{array}\right)\G_B\left(
\begin{array}{cc}
5 & 0 \\
0 & 1
\end{array}\right)=\Gamma_0(2,5):=\G_0(2)\cap \G(5)$ in the notation there 
and also $\G_1(50)\subset \G_B\subset \G_0(50)$.  
Then, the natural inclusion $\G_B\subset \SL_2(\Z)$ yields the quotient map 
\begin{equation}\label{beta1}
\beta_{\G_B}:X(\G_B)\lra X(\SL_2(\Z))\stackrel{\sim}{\lra} \mathbb{P}^1(\C),\ \G_B\tau\mapsto
\SL_2(\Z)\tau\mapsto \frac{1}{1728}j(\tau).
\end{equation}
It is well-known (cf. \cite[Proposition 2.1]{Filom}) that $\beta_{\G_B}$ is a Belyi function ramified only at 
$0,1,\infty$. 
Since each of $X(\G_B)$ and $X(\SL_2(\Z))$ has a structure as a projective smooth algebraic curve defined over $\Q$, the Belyi function $\beta_{\G_B}$ can be seen as an algebraic function 
$\beta^{{\rm alg}}_{\G_B}$ over 
$\Q$. Henceforth, we will explicitly compute $\beta^{{\rm alg}}_{\G_B}$.   

Let us first compute defining equations over $\Q$ of $X(\G_B)$. 
\begin{prop}The modular curve  $X(\G_B)$ has the following  algebraic model over $\Q$ as 
a projective smooth geometrically connected algebraic curve defined over $\Q:$
\begin{equation}\label{X}
X(\G_B)^{{\rm alg}}:\left\{
\begin{array}{l}
-(x^2+y^2)+2(z^2+w^2)=0 \\
x^3+xy^2+2y^3-4xz^2-8xzw=0
\end{array}\right.
\end{equation}
inside $\mathbb{P}^3$ with homogeneous coordinates $[x:y:z:w]$.
\end{prop}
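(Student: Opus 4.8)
The plan is to realize $X(\G_B)$ as a canonical curve and read off its equations from $q$-expansions of weight-$2$ cusp forms. First I would pin down the space $S_2(\G_B)$. Since $B(\C)\simeq X(\G_B)$ as a compact Riemann surface, the curve has genus $4$, so $\dim_\C S_2(\G_B)=\dim_\C H^0(X(\G_B),\Omega^1)=4$. Because $\G_1(50)\subset \G_B\subset \G_0(50)$, the space $S_2(\G_B)$ sits inside $S_2(\G_1(50))$ as the subspace fixed by the image of $\G_B$ under the diamond action of $\G_0(50)/\G_1(50)\simeq (\Z/50\Z)^\times$; concretely it is cut out by the appropriate nebentypus conditions. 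Decomposing $S_2(\G_1(50))$ into its newform and oldform pieces and selecting the relevant characters, I would produce an explicit basis $f_1,f_2,f_3,f_4$ whose $q$-expansions lie in $\Q[[q]]$. The existence of such a $\Q$-rational basis is guaranteed by the $q$-expansion principle together with the fact that $X(\G_B)$ already carries a canonical model over $\Q$ coming from the tower $\G_1(50)\subset \G_B\subset \G_0(50)$.

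Next I would invoke the theory of canonical curves. A non-hyperelliptic curve of genus $4$ embeds by its canonical linear system as a complete intersection of a unique quadric and a cubic surface in $\mathbb{P}^3$. Writing $[x:y:z:w]=[f_1(\tau):f_2(\tau):f_3(\tau):f_4(\tau)]$, the canonical map $\tau\mapsto [x:y:z:w]$ sends $X(\G_B)$ to such a complete intersection. To extract the defining equations I would compute the $q$-expansions of all monomials of degree $2$ and degree $3$ in $f_1,\dots,f_4$ to sufficiently high order and solve the resulting linear systems: the quadratic relations among the $f_i$ form a $1$-dimensional space (yielding the quadric, as $\binom{5}{2}=10$ maps onto $\dim H^0(\Omega^{\otimes 2})=9$), and the cubic relations modulo the ideal generated by the quadric form a $1$-dimensional space (as $\binom{6}{3}=20$ maps onto $\dim H^0(\Omega^{\otimes 3})=15$ with a $4$-dimensional contribution from the quadric times linear forms). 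After a suitable $\Q$-linear change of the chosen basis, these relations come out in the stated normalized form $-(x^2+y^2)+2(z^2+w^2)=0$ and $x^3+xy^2+2y^3-4xz^2-8xzw=0$.

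Finally I would verify the asserted properties. Since the basis has rational $q$-expansions, the relations have coefficients in $\Q$, so the model is defined over $\Q$. That the space of quadratic relations is exactly $1$-dimensional shows the canonical image is not a rational normal curve, hence $X(\G_B)$ is non-hyperelliptic and its canonical map is a closed embedding. The quadric $-(x^2+y^2)+2(z^2+w^2)$ has rank $4$ and is therefore smooth; a complete intersection of a smooth quadric with a cubic not containing a ruling of that quadric is a smooth, irreducible canonical curve of genus $4$, which yields both smoothness and geometric connectedness. The main obstacle is the first step: correctly isolating $S_2(\G_B)$ inside $S_2(\G_1(50))$ through the diamond and nebentypus bookkeeping and producing a genuinely $\Q$-rational basis, since mishandling the character conditions would give the wrong subspace or force irrational coefficients. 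Once the correct rational $q$-expansions are in hand, the extraction of the quadric and cubic and the verification of smoothness and connectedness reduce to routine linear algebra.
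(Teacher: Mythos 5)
Your proposal follows essentially the same route as the paper: identify $S_2(\G_B)$ inside the level-$50$ spaces via the character/diamond-operator decomposition (the paper computes $\G_B/\G_1(50)\simeq\langle \overline{11}\rangle$ so that $S_2(\G_B)=S_2(\G_0(50))\oplus S_2(\G_0(50),\chi_2)$, with a rational basis read off from LMFDB), then use the canonical embedding of a non-hyperelliptic genus-$4$ curve as a quadric--cubic complete intersection in $\mathbb{P}^3$ and determine the coefficients by linear algebra on $q$-expansions. Your extra dimension counts and the smoothness check are consistent refinements of the same argument, which the paper delegates to Petri's theorem as in the cited work of M.~Shimura.
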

\begin{proof}Under the isomorphism $(\Z/50\Z)^\times\simeq \G_0(50)/\G_1(50)$, 
the subgroup $\G_B/\G_1(50)\subset \G_0(50)/\G_1(50)$ corresponds to 
$\langle \overline{11}:=11\ {\rm mod}\ 50\rangle\simeq \Z/5\Z$. Thus, we have $\G_0(50)/\G_B\simeq 
\Z/4\Z$. It follows from this and \cite{LMFDB} that 
$$S_2(\G_B)=\bigoplus_{\chi\in \widehat{(\Z/50\Z)^\times}\atop \chi^4=1}S_2(\G_0(50),\chi)=
S_2(\G_0(50))\oplus S_2(\G_0(50),\chi_2)$$
where $\chi_2:(\Z/50\Z)^\times=\langle \overline{3}:=3\ {\rm mod}\ 50 \rangle \lra 
\C^\times,\ \overline{3}^i\mapsto (-1)^i$.
We refer to \cite[Section 5.2, p.169]{DS} for the first equality. 
As for generators,  we have $S_2(\G_0(50))=\langle f_1,f_2\rangle$ and $S_2(\G_0(50),\chi_2)=\langle f_3,f_4\rangle$ 
such that their $q$-expansions are given by 
\begin{equation}\label{qexp}
\begin{array}{l}
f_1(\tau):=f_{{\rm 50.2.a.a}}(q)=q - q^2 + q^3 + q^4 - q^6 + 2q^7 - \cdots  \\
f_2(\tau):=f_{{\rm 50.2.a.b}}(q)=q + q^2 - q^3 + q^4 - q^6 - 2q^7 + q^8 -\cdots \\ 
f_3(\tau):=\frac{1}{2}(f_{{\rm 50.2.b.a}}(q,\sqrt{-1})+f_{{\rm 50.2.b.a}}(q,-\sqrt{-1}))=q - q^4 - q^6 +\cdots \\ 
f_4(\tau):=\frac{1}{2\sqrt{-1}}(f_{{\rm 50.2.b.a}}(q,\sqrt{-1})-f_{{\rm 50.2.b.a}}(q,-\sqrt{-1}))= q^2 + q^3 - 2 q^7 - q^8 -
\cdots
\end{array}
\end{equation}
where each label $\ast$ of $f_\ast$ corresponds to the corresponding datum on \cite{LMFDB} 
and we can read off the Fourier coefficients up to $q^{99}$ for which it is enough for the computation below.  
Readers may visit the website https://www.lmfdb.org/ModularForm/GL2/Q/holomorphic/ linked in \cite{LMFDB} and 
enter 50 for the level, and 2 for the weight, and either 1 or 2 for the character order. 
Then all labelled forms as above will show up individually.  

Since Bring's curve is non-hyperelliptic and of genus 4, its canonical divisor is very ample.
Hence, by noting $S_2(\G_B)\stackrel{\sim}{\lra}H^0(X(\G_B),\Omega^1_{X(\G_B)}),\ f(\tau)\mapsto f(\tau)d\tau$, we have the canonical embedding:
$$\Phi:X(\G_B)\hookrightarrow \mathbb{P}^3(\C),\ \tau\mapsto 
[x:y:z:w],\ (x,y,z,w):=(f_1(\tau),f_2(\tau),f_3(\tau),f_4(\tau)).$$
By using Petri's theorem as explained in \cite[Section 2.2]{SM} for $X_0(N)$ 
(which also applies to $\G_B$), the image of $\Phi$ is given by the intersection of 
a quadratic equation and a cubic equation in $\mathbb{P}^3$. The claim follows by determining 
the coefficients of those equations by using (\ref{qexp}); see \cite{SM} for computational details.
\end{proof}

\subsection{An algebraic Belyi function on $X(\G_B)^{{\rm alg}}$}
In this subsection, we compute the algebraic function over $\Q$ corresponding to the quotient map $X(\G_B)\lra  
X(\SL_2(\Z))\stackrel{\SL_2(\Z)\tau\mapsto \frac{1}{1728}j(\tau)}{\simeq}  \mathbb{P}^1(\C)$ by using the intermediate modular curve $X_0(50)$ so that the quotient map factors through it:
$$X(\G_B)\stackrel{\pi_1}{\lra}  X_0(50)\stackrel{\pi_2}{\lra} X(\SL_2(\Z))
\stackrel{j\atop \sim}{\lra} \mathbb{P}^1(\C).$$
Here $\pi_1$ and $\pi_2$ are natural projections induced by the inclusion 
$\G_B\subset \G_0(50)\subset \SL_2(\Z)$. 
Let $X_0(50)^{{\rm alg}}$ be the hyperelliptic curve over $\Q$ defined by 
\begin{equation}\label{m50}
t^2=s^6-4s^5-10s^3-4s+1
\end{equation}
such that $X_0(50)^{{\rm alg}}(\C)\simeq X_0(50)$ as a compact Riemann surface (see \cite[p.290]{HM}). 
An algebraic function over $\Q$ for $j\circ \pi_2$ has been computed in \cite[p.290]{HM} 
with (\ref{m50}) is explicitly given by 
$$X_0(50)^{{\rm alg}}_\Q\lra \mathbb{P}^1_\Q,\ (s,t)\mapsto j=h(s,t):=\frac{A(s)+B(s)t}{2s^{25}(s^4-s^3+s^2-s+1)^2}$$
where the polynomials $A(x)$ and $B(x)$ in the one variable $x$ are given in \cite[p.290]{HM}. 
Thus, we have only to compute an algebraic map between $X$ and (\ref{m50}) 
corresponding to $\pi_1$.  
 
Put $T:=y/z=f_1/f_2$ and $S:=z/w=f_3/f_4$ so that $T,S\in \C(X_0(50))$. By using Mathematica version 12.1, we can compute 
a Gr\"oebner basis for the ideal generated by $zT-y,wS-z$ and (\ref{X}) in $\Q[x,y,z,w,T,S]$ such that  
we can find a relation $1 + S^2 + 2 S^3 - 4 T - 4 S^2 T - T^2 - S^2 T^2 + 2 S^3 T^2=0$ in that basis. The curve defined by this equation is isomorphic over $\Q$ to (\ref{m50}) by  
$$(S,T)=\Bigg(\frac{s+1}{s-1},\frac{s^3-s^2+s-1+t}{s(s^2+s+2)}\Bigg),\ 
(s,t)=\Bigg(\frac{S+1}{S-1}, \frac{2 (-2 - 2 S^2 + T + S^2 T + 2 S^3 T)}{(S-1)^3}\Bigg).$$
Thus, we have  proved the following theorem:
\begin{thm}\label{belyi1}
The algebraic function $$\beta^{{\rm alg}}_{\G_B}:X(\G_B)^{{\rm alg}}\lra \mathbb{P}^1_\Q,\ [x:y:z:w]\mapsto 
\frac{1}{1728}h\Bigg(\frac{z+w}{z-w},\frac{2 (w^3 x - 2 w^3 y + w x z^2 - 2 w y z^2 + 2 x z^3)}{y (z - w)^3}  \Bigg)$$ is an algebraic Belyi function defined over $\Q$. It is also an algebraic model over $\Q$ of $X(\G_B)\lra X(\SL_2(\Z))\simeq \mathbb{P}^1(\C),\tau\mapsto \frac{1}{1728}j(\tau)$.
\end{thm}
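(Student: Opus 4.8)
The plan is to exploit the factorization of $\beta_{\G_B}$ through the intermediate modular curve $X_0(50)$. Since $\G_B\subset \G_0(50)\subset \SL_2(\Z)$, the analytic quotient map decomposes as $X(\G_B)\stackrel{\pi_1}{\lra}X_0(50)\stackrel{j\circ\pi_2}{\lra}\mathbb{P}^1(\C)$, and by the computation of \cite{HM} the second arrow is already modeled over $\Q$ by $(s,t)\mapsto \tfrac{1}{1728}h(s,t)$ on the hyperelliptic model (\ref{m50}). Consequently the whole problem reduces to producing an algebraic model over $\Q$ of the first arrow $\pi_1$ in the canonical coordinates $[x:y:z:w]$ of $X(\G_B)^{{\rm alg}}$, i.e.\ to expressing the Hulek--Maier coordinates $(s,t)$ as rational functions of $x,y,z,w$, and then composing the two maps.

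To model $\pi_1$, I would first isolate modular functions on $X(\G_B)$ that are pulled back from $X_0(50)$, i.e.\ that are $\G_0(50)$-invariant. Because $f_1,f_2$ share the trivial character while $f_3,f_4$ share the character $\chi_2$, the ratios $T:=x/y=f_1/f_2$ and $S:=z/w=f_3/f_4$ are $\G_0(50)$-invariant and hence lie in $\C(X_0(50))$. The next step is to determine the algebraic relation they satisfy, by eliminating $x,y,z,w$ from the defining equations (\ref{X}) together with $yT=x$ and $wS=z$ via a Gr\"obner basis; this should yield a single relation of bidegree $(3,2)$ in $S,T$, cutting out a plane model of $X_0(50)$. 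Finally I would match this $(S,T)$-curve with the model (\ref{m50}) over $\Q$ by exhibiting an explicit birational change of coordinates, beginning with $s=(S+1)/(S-1)$ and a compatible rational expression for $t$. Back-substituting $S=z/w$ and $T=x/y$ into these formulas produces exactly the two arguments of $h$ appearing in the statement, so that $\beta^{{\rm alg}}_{\G_B}=\tfrac{1}{1728}\,h\circ(\text{this map})$.

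The elimination and the final composition of rational maps are mechanical and can be handed to a computer algebra system. The step I expect to be the main obstacle is \emph{certifying} that the algebraic map produced this way is genuinely a model of the analytic map $\pi_1$, and not merely some morphism between two abstract curves that happen to be isomorphic. Since both (\ref{m50}) and the $(S,T)$-curve have genus $2$, there is more than one isomorphism between them over $\bQ$, and the correct one must be pinned down; concretely this means verifying that the Hulek--Maier coordinates $(s,t)$ correspond to $(S,T)$ \emph{as modular functions}, which I would do by comparing $q$-expansions computed from (\ref{qexp}) against those of $s,t$ on $X_0(50)$. Rationality of all these $q$-expansions, together with rationality of $h$ and of the change of coordinates, then ensures that the composite descends to $\Q$. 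Once this compatibility is secured, the Belyi property --- ramification only over $0,1,\infty$ --- is inherited directly from the modular description of $\beta_{\G_B}$, and both assertions of the theorem follow at once.
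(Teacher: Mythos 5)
Your proposal follows essentially the same route as the paper: factor $\beta_{\G_B}$ through $X_0(50)$, use the Hibino--Murabayashi model and their rational expression for $j$, form the $\G_0(50)$-invariant ratios $S=z/w$ and $T=x/y$ (the paper's line ``$T:=y/z=f_1/f_2$'' is a typo for $x/y$, as your reading and the final formula confirm), eliminate via a Gr\"obner basis to get the $(S,T)$-relation, and match it birationally with the hyperelliptic model before composing. Your extra insistence on certifying that the identification $(S,T)\leftrightarrow(s,t)$ is the correct one \emph{as modular functions} (by comparing $q$-expansions) is a sound refinement of a step the paper treats tersely, but it does not change the argument.
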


\begin{rmk} The Belyi function $X(\G_B)\lra X(\SL_2(\Z))\simeq \mathbb{P}^1(\C)$ 
in question is a non-Galois covering of $\mathbb{P}^1(\C)$. Thus, it gives rise to a non-regular dessin on $X(\G_B)$. 
\end{rmk}

\subsection{A decomposition of the Jacobian variety over a number field}\label{decompJ}
In this section, we study a number field $L$ such that the Jacobian variety ${\rm Jac}(X(\G_B)^{{\rm alg}})$ of $X(\G_B)^{{\rm alg}}$ decomposes completely over $L$.
For two abelian varieties $A_1,A_2$ over a number field $K$ and a field extension $M/K$, 
we write $A_1\stackrel{M}{\sim}A_2$ if $A_1$ is isogenous over $M$ to $A_2$. 
For each newform $f$ in $S_2(\G_B)$, we denote by $A_f$ the Shimura's abelian variety over $\Q$ 
of dimension $[\Q_f:\Q]$ where $\Q_f$ is the Hecke field of $f$ (cf. \cite[Section 3]{Ribet}). We note that $A_f$ is $\Q$-simple.  
\begin{prop}\label{decomp} Keep the notation being in Proposition \ref{X}. 
Let $L=\Q(\sqrt{5},\sqrt{5+2\sqrt{5}})$.
Then, it holds that
$${\rm Jac}(X(\G_B)^{{\rm alg}})\stackrel{\Q}{\sim}A_{f_{{\rm 50.2.b.a}}}\times J_0(50)_\Q
\stackrel{\Q}{\sim}A_{f_{{\rm 50.2.b.a}}}\times E_{{\rm 50.a3}}\times E_{{\rm 50.b3}}\stackrel{L}{\sim}
E^4$$
where $E_\ast$ is the elliptic curve over $\Q$ with conductor 50 whose label $\ast$ corresponds to one in \cite{LMFDB} and $E=E_{{\rm 50.a3}}:y^2+xy+y=x^3-x-2$ is an elliptic curve over $\Q$ with $j$-invariant $-\frac{25}{2}$.
\end{prop}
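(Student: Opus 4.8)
The plan is to read the two $\Q$-isogenies off the Eichler--Shimura decomposition and to reserve the genuine geometric work for the final relation over $L$.

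First I would treat the $\Q$-isogenies. By Eichler--Shimura (\cite{DS}) the cotangent space of ${\rm Jac}(X(\G_B)^{{\rm alg}})$ is $S_2(\G_B)$ as a Hecke module, so the decomposition
$$S_2(\G_B)=S_2(\G_0(50))\oplus S_2(\G_0(50),\chi_2)$$
computed in the proof of Proposition \ref{X} refines into Galois orbits of newforms. Since every divisor $d\mid 50$ with $d<50$ gives $X_0(d)$ of genus $0$, the space $S_2(\G_0(50))$ is entirely new and is spanned by the two rational newforms $f_1=f_{{\rm 50.2.a.a}}$ and $f_2=f_{{\rm 50.2.a.b}}$, while $S_2(\G_0(50),\chi_2)$ is the single Galois orbit of $f_{{\rm 50.2.b.a}}$, whose Hecke field is $\Q(\sqrt{-1})$ (as witnessed by the conjugate pair $f_3,f_4$ in (\ref{qexp})). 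As each orbit occurs with multiplicity one (genus $4=2+2$), Shimura's construction (\cite{Ribet}) gives
$${\rm Jac}(X(\G_B)^{{\rm alg}})\stackrel{\Q}{\sim}A_{f_1}\times A_{f_2}\times A_{f_{{\rm 50.2.b.a}}},\quad \dim A_{f_{{\rm 50.2.b.a}}}=[\Q(\sqrt{-1}):\Q]=2.$$
Recognising $A_{f_1}\times A_{f_2}\stackrel{\Q}{\sim}J_0(50)$ (as $S_2(\G_0(50))$ is entirely new) gives the first equivalence, and identifying the rational newforms $f_1,f_2$ with the isogeny classes $50.a,50.b$ (\cite{LMFDB}) yields $J_0(50)_\Q\stackrel{\Q}{\sim}E_{{\rm 50.a3}}\times E_{{\rm 50.b3}}$, the second equivalence.

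The substance is the last relation over $L=\Q(\sqrt5,\sqrt{5+2\sqrt5})$, which I would split into two assertions. (i) The curves $E_{{\rm 50.a3}}$ and $E_{{\rm 50.b3}}$ become isogenous over $L$: I would check that they are the quadratic twist of one another by $5$ (twisting by $5\equiv 1\bmod 4$ is unramified at $2$ and keeps the additive reduction at $5$, hence preserves the conductor $50=2\cdot5^2$), so already over $\Q(\sqrt5)\subset L$ one has $E_{{\rm 50.b3}}\stackrel{L}{\sim}E=E_{{\rm 50.a3}}$. (ii) The $\GL_2$-type surface $A_{f_{{\rm 50.2.b.a}}}$, with ${\rm End}^0=\Q(\sqrt{-1})$, carries an inner twist and is a building block in Ribet's sense (\cite{Ribet}); concretely I would identify it, up to $\Q$-isogeny, with the restriction of scalars ${\rm Res}_{\Q(\sqrt5)/\Q}E'$ of an elliptic $\Q$-curve $E'$, so that $A_{f_{{\rm 50.2.b.a}}}\sim E'\times E'^{\tau}$ over $\Q(\sqrt5)$ and $A_{f_{{\rm 50.2.b.a}}}\sim E'^2$ over the quadratic extension $L/\Q(\sqrt5)$ on which the $\Q$-curve isogeny $E'\to E'^{\tau}$ becomes defined. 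It then remains to verify that $E'$ lies in the $L$-isogeny class of $E$ and that the splitting field of the $\Q$-curve is exactly $L=\Q(\alpha)$, $\alpha^4-10\alpha^2+5=0$. Combining (i) and (ii) gives $A_{f_{{\rm 50.2.b.a}}}\times E_{{\rm 50.a3}}\times E_{{\rm 50.b3}}\stackrel{L}{\sim}E^2\times E\times E=E^4$.

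The main obstacle is step (ii): pinning down the precise splitting field of the building block and confirming that it is $L$-isogenous to the specific curve $E$ rather than to some other $\Q$-curve in a different isogeny class. In practice this means determining the inner-twist character of $f_{{\rm 50.2.b.a}}$, showing that the compositum of its fixed field with $\Q(\sqrt5)$ is the cyclic quartic field $L$, and then matching the Frobenius traces $a_p$ over $L$ for enough good primes $p$ against those of $E$, using the Fourier coefficients recorded in (\ref{qexp}). The comparison is a finite calculation by Faltings' isogeny criterion, but it must be carried out with $a_p$-data rather than with Weierstrass models, since the four elliptic factors are only isogenous---not isomorphic---to $E$ over $L$.
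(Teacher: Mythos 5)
Your handling of the two $\Q$-isogenies is correct and essentially what the paper does: the paper simply cites \cite[Proposition (2.3)]{Ribet80} for the decomposition of ${\rm Jac}(X(\G_B)^{{\rm alg}})$ according to the newform decomposition of $S_2(\G_B)=S_2(\G_0(50))\oplus S_2(\G_0(50),\chi_2)$, and your identifications of $A_{f_1}\times A_{f_2}$ with $J_0(50)_\Q\stackrel{\Q}{\sim}E_{{\rm 50.a3}}\times E_{{\rm 50.b3}}$ and of $A_{f_{{\rm 50.2.b.a}}}$ as a two-dimensional factor agree with it. Your step (i) also matches the paper, which exhibits the explicit isomorphism $E_{{\rm 50.a3}}\simeq E_{{\rm 50.b3}}$ over $\Q(\sqrt5)$ with $u'=\sqrt5/5$, i.e.\ the two curves are quadratic twists by $5$.

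The gap is exactly where you locate it, in step (ii), and the idea you are missing is the one that makes the paper's argument short. The paper reads off from \cite{LMFDB} that $A_{f_{{\rm 50.2.b.a}}}\stackrel{\Q}{\sim}{\rm Res}_{\Q(\sqrt5)/\Q}(E_1)$ for the \emph{explicit} curve $E_1: y^2+(\e+1)xy+(\e+1)y=x^3+\e x^2$ over $\Q(\sqrt5)$, $\e=\frac{1+\sqrt5}{2}$, and then observes that $E_1$ has $j$-invariant $-\frac{25}{2}$, the same as $E$. Hence $E_1$ and $E$ are isomorphic over $\bQ$ --- not merely isogenous --- and the whole of your step (ii) collapses to computing the field of definition of one isomorphism, which the paper does by solving Silverman's change-of-variable equations ($s^2+s+\e^2=0$, $r=-\e$, $u=\e^{-2}(1+2s)$); the resulting quadratic extension of $\Q(\sqrt5)$ is what forces the choice of $L$. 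In particular your closing concern --- that one must compare Frobenius traces and invoke Faltings because the elliptic factors are ``only isogenous, not isomorphic, to $E$ over $L$'' --- is unfounded: all four factors are $\bQ$-isomorphic to $E$ because they share the $j$-invariant $-\frac{25}{2}$, and $L$ is precisely a field over which these isomorphisms descend. Your alternative route (determining the inner-twist character, the splitting field of the $\Q$-curve, and matching $a_p$-data) would presumably also succeed, but it is substantially heavier, and as written the decisive verifications are deferred rather than carried out, so the proposal does not yet constitute a proof.
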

\begin{proof}We refer to 
$$\text{https://www.lmfdb.org/EllipticCurve/2.2.5.1/100.1/a/3}$$
linked from 
 $$\text{https://www.lmfdb.org/ModularForm/GL2/Q/holomorphic/50/2/b/a/}$$
for \cite{LMFDB}. 
The decomposition over $\Q$ follows from \cite[Proposition (2.3)]{Ribet80}. 
It follows from \cite{LMFDB} (see the above websites)  that  
$A_{f_{{\rm 50.2.b.a}}}\stackrel{\Q}{\sim}
{\rm Res}_{\Q(\sqrt{5})/\Q}(E_1)$ where 
$E_1=E_{100.1-{\rm a}3}:y^2+(\e+1)xy+(\e+1)y=x^3+\e x^2,\ \e:=\ds\frac{1+\sqrt{5}}{2}$ with the $j$-invariant $-\frac{25}{2}$. Here ${\rm Res}_{\Q(\sqrt{5})/\Q}$ stands for the Weil-restriction 
from $\Q(\sqrt{5})$ to $\Q$.
Since $E_1$ and $E$ are isomorphic each other over $\bQ$, by \cite[Chapter III, Proposition 3.1]{Sil}, 
there exist $u,r,s,t\in \bQ$ such that $E\stackrel{\sim}{\lra}E_1,\ (x,y)\mapsto 
(u^2x+r,u^3y+u^2 sx+r)$. Solving the system of the algebraic equations, we have
$$s^2+s+\e^2=0,\ r=-\e,\ u=\e^{-2}(1+2s).$$
On the other hand, by  \cite[Chapter III, Proposition 3.1]{Sil} again, $E=E_{{\rm 50.a3}}\stackrel{\Q(\sqrt{5})}{\simeq} E_{{\rm 50.b3}},\ (x,y)\mapsto (u'^2x+r',u'^3y+u'^2 s'x+r')$ with 
$r'=0,\ s' =\ds\frac{1}{10} (-5 + \sqrt{5}),\ t'= \frac{1}{50} (-25 + \sqrt{5}),\ u' = \frac{\sqrt{5}}{5}$. 
Thus, the claim follows. 
\end{proof}

\begin{rmk}\label{decomp-remark} Since $A_{f_{{\rm 50.2.b.a}}}$ is a $\Q$-simple abelian surface, ${\rm Jac}(X)$ 
never be isogenous over $\Q$ to the product of four elliptic curves over $\Q$. 
\end{rmk}

\section{Bring's curve as a triangular (Shimura) curve}\label{tc}
Let $\Delta(2,4,5)\subset \SL_2(\R)$ be the triangle group corresponding to the 
triple $(a,b,c)=(2,4,5)$  (see \cite[Section 2]{CV}) so that 
$F=\Q(\sqrt{2},\sqrt{5})$ and $E=\Q(\sqrt{5})$ in the notation there. 
By \cite[Proposition 5.13]{CV}, there exists an embedding 
$\Delta(2,4,5)\hookrightarrow \SL_2(\O_E)$ where $\O_E$ stands for the ring of inters of $E$. We define $\Delta$ by 
the kernel of $\Delta(2,4,5)\lra \SL_2(\O_E)\lra \SL_2(\O_E/(\sqrt{5}))$ and it yields the 
isomorphism ${\rm Aut}(X_{\Delta})=\Delta(2,4,5)/\Delta\simeq \SL_2(\O_E/(\sqrt{5}))\simeq \SL_2(\F_5)\simeq S_5$. 
To be more precise, $\Delta(2,4,5)/\{\pm 1\}$ can be regarded as the group of the reduced norm 1 
of a maximal order inside the quaternionic algebra $A:=\Big(\frac{-1,\e}{E}\Big)$ over $E$ 
where $\e=\frac{1+\sqrt{5}}{2}$ 
(see \cite[p.53]{CV}). Thus, $X_{\Delta(2,4,5)}=(\Delta(2,4,5)/\{\pm 1\})\bs \bH$ is a Shimura curve 
(of genus zero).  
Therefore, we have a $S_5$-Galois covering $X_\Delta\lra X_{\Delta(2,4,5)}$ between 
triangular Shimura curves which is a Belyi function by construction.  
The quotient map $B\lra B/{\rm Aut}_\Q(B)=B/S_5\simeq \mathbb{P}^1_\Q$ is an algebraic 
Belyi function over $\Q$ since its base change to $\C$ is corresponding to the quotient map $X_{\Delta(2,4,5)}\lra X_\Delta$. 
Thus, we have only to find out a generator of the function field of $B/S_5$. 
Let $\rho:B\lra B$ and $\kappa:B\lra B$ be the automorphism corresponding to 
$(12345),(12)\in S_5$ respectively. Notice that $\rho,\kappa$ generate $S_5$.  
Put $T=(x_1x_2x_3x_4)/x^4_5$ and 
\begin{equation}\label{t}
t:=\sum_{i=1}^5\rho^i (T)+\sum_{i=1}^5 \kappa \rho^i (T).
\end{equation}
By direct computation, we can check $t$ is invariant under the action of $S_5$. 
\begin{thm}\label{belyi2}Keep the notation being as above. 
The rational function $t$ is a generator of $\Q(B/S_5)$. It gives rise to an algebraic Belyi function 
over $\Q$:
$$B\lra \mathbb{P}^1,\ [x_1:x_2,x_3:x_4:x_5]\mapsto t.$$ 
\end{thm}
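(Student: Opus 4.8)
The plan is to realize the stated map as the composite of the quotient $\pi:B\lra B/S_5$ with an isomorphism $B/S_5\stackrel{\sim}{\lra}\mathbb{P}^1_\Q$, so that the only real content is to check that $t$ generates the invariant field $\Q(B/S_5)=\Q(B)^{S_5}$. Since $t$ is $S_5$-invariant, it descends to a morphism $\bar{t}:B/S_5\lra\mathbb{P}^1$, and the map in question factors as $B\stackrel{\pi}{\lra}B/S_5\stackrel{\bar{t}}{\lra}\mathbb{P}^1$. Because $S_5={\rm Aut}_\Q(B)$ acts faithfully, $\deg\pi=|S_5|=120$, and $\deg(t)=\deg\pi\cdot\deg\bar{t}$; hence $t$ generates $\Q(B/S_5)$ exactly when $\deg\bar{t}=1$, i.e. when $\deg(t:B\lra\mathbb{P}^1)=120$. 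Everything therefore reduces to computing this degree.

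To compute $\deg(t)$ I would first put $t$ in lowest terms. Expanding the orbit sum (\ref{t}) and collecting equal summands, one finds that up to a nonzero rational constant $t=\sum_{j=1}^5\frac{\prod_{i\neq j}x_i}{x_j^4}$, which is manifestly $S_5$-symmetric, consistent with the invariance already noted. The poles of $t$ are supported on the coordinate hyperplane sections $H_j:=\{x_j=0\}\cap B$, each of degree $\deg B=6$ (recall $B$ is the $(2,3)$ complete intersection cut out by $\sum_i x_i^2=\sum_i x_i^3=0$ inside $\{\sum_i x_i=0\}\simeq\mathbb{P}^3$).

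The crucial geometric input is that no point of $B$ has two vanishing coordinates. By $S_5$-symmetry it suffices to treat $x_4=x_5=0$: the defining equations then force the power sums $p_1,p_2,p_3$ of $(x_1,x_2,x_3)$ to vanish, so by Newton's identities $e_1=e_2=e_3=0$ and hence $x_1=x_2=x_3=0$, which is not a point of $\mathbb{P}^4$. Consequently the $H_j$ are pairwise disjoint, so at each $P\in H_j$ only the $j$-th summand of $t$ is singular and its numerator $\prod_{i\neq j}x_i$ is nonvanishing at $P$; there is no cancellation among the summands, and the polar divisor of $t$ is exactly $\sum_{j=1}^5 4H_j$, of degree $5\cdot4\cdot6=120$. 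Thus $\deg(t)=120=\deg\pi$, forcing $\deg\bar{t}=1$, so $\bar{t}$ is an isomorphism and $t$ generates $\Q(B/S_5)$.

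Finally, since $B/S_5\simeq\mathbb{P}^1_\Q$ and the quotient $B\lra B/S_5$ is an algebraic Belyi function over $\Q$ (established above), the generator $t$ supplies the isomorphism $B/S_5\stackrel{\sim}{\lra}\mathbb{P}^1_\Q$, and the composite is precisely $[x_1:\cdots:x_5]\mapsto t$; hence this is an algebraic Belyi function over $\Q$. I expect the main obstacle to be the pole count — in particular the claim that no two coordinates vanish simultaneously on $B$, which is exactly what guarantees that the $H_j$ are disjoint and that no cancellation occurs in the sum — after which the degree equality and the descent to $\mathbb{P}^1_\Q$ are formal.
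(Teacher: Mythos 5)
Your proof is correct, and it reaches the key degree count $[\Q(B):\Q(t)]=120$ by a genuinely different route than the paper. The paper proceeds computationally: setting $v=x_1/x_0$, it uses a Gr\"obner basis to express $t$ as an explicit degree-$20$ rational function of $v$, and then multiplies by the $6$ points of $B$ lying over a generic value of $v$ to get $20\times 6=120$; the final sandwich $\Q(t)\subset\Q(B/S_5)\subset\Q(B)$ with $[\Q(B):\Q(B/S_5)]=|S_5|=120$ is the same in both arguments. You instead observe that the orbit sum collapses (up to the constant $2$) to the symmetric function $\sum_{j}\bigl(\prod_{i\neq j}x_i\bigr)/x_j^4$ and compute its polar divisor directly: since $B$ is the $(2,3)$ complete intersection in the hyperplane $\{\sum x_i=0\}\simeq\mathbb{P}^3$, each $H_j=\{x_j=0\}\cap B$ has degree $6$, and your Newton's-identities argument that no two coordinates vanish simultaneously on $B$ is exactly what rules out cancellation and gives polar divisor $4\sum_j H_j$ of degree $120$. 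This is cleaner and verifiable by hand, avoids computer algebra entirely, and makes the geometric reason for the degree transparent; the paper's computation, on the other hand, yields as a byproduct the explicit expression of $t$ in terms of $v$ (exhibiting $\Q(v)$ as an index-$6$ intermediate field), which is of independent computational use. Both proofs rely on the preceding identification of $B\to B/S_5$ with the triangle-group quotient to know that the resulting map is ramified over only three points, so your appeal to that fact is on the same footing as the paper's.
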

\begin{proof}
Put $v:=x_1/x_0$ for simplicity. 
By using Grobner basis, we have $t=\ds\frac{-2g(v)}{v^4 (1 + v)^4 (1 + v^2)^4}$ where 
$$g(v)=1 + 5 v + 15 v^2 + 35 v^3 + 65 v^4 + 101 v^5 + 135 v^6 + 
   155 v^7 + 165 v^8 + 165 v^9 + 161 v^{10} $$
   $$+ 165 v^{11} +165 v^{12} + 
   155 v^{13} + 135 v^{14} + 101 v^{15} + 65 v^{16} + 35 v^{17} + 15 v^{18} + 
   5 v^{19} + v^{20}.$$
   For a generic $t\in \Q$, we have the number of $v$ satisfying the above equation is 20. 
Further, for each $v$, we have $6$ pairs of $(x_2/x_5,x_3/x_5)$ modulo the equality 
$\ds\sum_{i=1}^5 x_i=0$. 
Thus, $[\Q(B):\Q(t)]=20\times 6=120$. 
On the other hand, we observe that $[\Q(B):\Q(B/S_5)]=[\C(B):\C(B/S_5)]=|\Delta(2,3,5)/\Delta|=|S_5|=120$ and 
$\Q(t)\subset \Q(B/S_5)\subset  \Q(B)$. Thus, the claim follows. 
\end{proof}
\begin{rmk}Since any {\rm(}compact{\rm)} Shimura curve does not posses any real points {\rm(}cf. \cite{Shimura}{\rm)},  
$t$ can not generate the function field of any rational model over $E=\Q(\sqrt{5})$ of $X_{\Delta(2,4,5)}$. 
\end{rmk}

\section{A relation between two algebraic models}\label{tab}
In this section, we study the field of definition of isomorphisms between $B$, $X(\G_B)^{{\rm alg}}$ and the 
Hulek-Craig curve. 
It is known that Bring's curve $B$ is isomorphic to the algebraic model $X(\G_B)^{{\rm alg}}$ of $X(\G_B)$ 
over $\bQ$. We would like to discuss a number field $K$ such that $B$ is isomorphic over $K$ to $X(\G_B)^{{\rm alg}}$.  Obviously, $K$ cannot be $\Q$ since $B(\Q)$ is empty because of the second quadratic equation in (\ref{Bring}) 
while obviously $[1:1:1:0]$ is a $\Q$-rational point on $X(\G_B)^{{\rm alg}}$.  
For another reason, it is known that ${\rm Jac}(B)\stackrel{\Q}{\sim}
E^4$ (cf. \cite[p.87, Exercise-b)]{Se08}) where $E$ is given in Proposition \ref{decomp}.  
Thus, the field $K$ cannot be $\Q$ since ${\rm Jac}(B)$ is not isogenous over $\Q$ to 
${\rm Jac}(X)$ (see also Remark \ref{decomp-remark}). We should note that ${\rm Jac}(X)$ is isogenous over 
$L$ to ${\rm Jac}(B)$, but it is not necessarily true that $X$ is isomorphic over $L$ to $B$. 
Thus, we need more careful study for our purpose.

However, fortunately, it is known from \cite[Proposition 2.9 and its proof]{BDH} that $B$ is isomorphic over $\Q(\zeta_5)$ to Hulek-Craig's curve $X^{{\rm HC}}$ which is 
given by the desingularization of a projective geometrically irreducible singular curve
$$F(x_1,y_1,z_1):=x_1(y^5_1+z^5_1)+x^2_1y^2_1z^2_1-x^4_1y_1z_1-2y_1^3z^3_1=0$$
inside $\mathbb{P}^2$ with the coordinates $[x_1:y_1:z_1]$. This curve has studied in \cite{H} and \cite{C}.  
As explained in \cite[p.27]{C} that Hulek-Craig's curve has 
the parametrization in terms of modular forms:
$$x_1=\sum_{n\in \Z}(-q)^{(5n)^2},\ y_1=\sum_{n\in \Z}(-q)^{(5n+1)^2},\ 
z_1=\sum_{n\in \Z}(-q)^{(5n+2)^2},\ q=e^{2\pi \sqrt{-1}\tau}$$
where we slightly modified $q$ with $-q$ from Craig's notation. 
In terms of the Jacobi's theta function 
$\theta_{a,b}(z,\tau)$, $(z,\tau)\in \C\times \mathbb{H}$ with the rational character $(a,b)\in \Q^2$ (see \cite[p.10]{Mum}), we see that 
$$x_1=\theta_{0,0}(0,25(2\tau+1)),\ y_1=\theta_{\frac{1}{5},0}(0,25(2\tau+1)),\ 
z_1=\theta_{\frac{2}{5},0}(0,25(2\tau+1))$$
and thus, $x_1,y_1,z_1$ are modular forms of weight $\frac{1}{2}$ with respect to 
the double covering of $\G(5)\cap \G_1(4)$ (see \cite[Chapter IV]{Kob}).
It is known (cf. \cite[Lemma 2.6]{BDH}) that 
$$v_1=\frac{(Y^3-X)dX}{F_Y(X,Y,1)},\ 
v_2=\frac{(Y^2 X-1)dX}{F_Y(X,Y,1)},\ 
v_3=\frac{(Y-X^2)dX}{F_Y(X,Y,1)},\ 
v_4=\frac{Y(X^2-Y)dX}{F_Y(X,Y,1)}$$
where $X=x_1/z_1,\ Y=y_1/z_1$, and $F_Y(X,Y,1):=\frac{\partial F(X,Y,1)}{\partial Y}$. 
Then, in terms of $q$-expansion, we have 
$$(v_1,v_2,v_3,v_4)=
(-q^3 + q^8 + 4 q^{13}\cdots ,q^2 - 2 q^7 - q^{12}\cdots, -q^4 + 2 q^9 + 2 q^{14}\cdots, -q + q^6 + 3 q^{11}\cdots  )dq$$
$$=(f_1,f_2,f_3,f_4)A(dq),\ 
A=\left(\begin{array}{cccc}
-\frac{1}{4} & -\frac{1}{4} & -\frac{1}{4} & -\frac{1}{4}  \\   
\frac{1}{4}  & \frac{1}{4} & -\frac{1}{4} & -\frac{1}{4}  \\   
              0& 0 & \frac{1}{2} & -\frac{1}{2}  \\   
-\frac{1}{2} & \frac{1}{2} & 0   & 0
\end{array}
\right)
$$
Put $w_i:=F_Y(X,Y,1)\ds\frac{v_i}{dX}$ for $i=1,2,3,4$ and let us regard $[f_1 dq:f_2 dq:f_3 dq:f_4 dq]$ with 
the coordinates $[x:y:z:w]$ of $X$ in view of the theory of canonical curves. 
Then, $[x:y:z:w]=[w_1:w_2:w_3:w_4]A^{-1}$. Using this, we have a birational map $\psi$ from 
the singular model of $X^{{\rm HC}}$ defined by $F(x,y,z)$ to $X(\G_B)^{{\rm alg}}$ given by 
$\psi([x_1:y_1:z_1])=$
\begin{equation}\label{HC}
\psi([x_1:y_1:z_1])=[x:y:z:w]
\end{equation}
where 
$$\left\{\begin{array}{l}
x=(z_1-y_1) (x^2_1+x_1 y_1+y^2_1+x_1 z_1+z^2_1),\\
y=(y_1-z_1) (-x^2_1+x_1 y_1+y^2_1+x_1 z_1+2 y_1 z_1+z^2_1),\\
z=(y_1+z_1) (-x^2_1+y_1 z_1),\\
w=x_1 y^2_1-y^3_1+x_1 z^2_1-z^3_1.
\end{array}\right.
$$
Then, it yields an isomorphism over $\Q$ from $X^{{\rm HC}}$ to $X(\G_B)^{{\rm alg}}$. 
This result is not obvious from the construction explained in \cite[p.82-83]{H}.  
Thus, we have checked the following result:
\begin{thm}\label{isom}Recall that the projective smooth model $X(\G_B)^{{\rm alg}}$ over $\Q$ of the modular curve $X(\G_B)$ from (\ref{X}).
It holds that 
\begin{enumerate}
\item Bring's curve $B$ is isomorphic over $\Q(\zeta_5)$ to $X^{{\rm HC}}$ whose isomorphism is given by \cite[Proposition 2.9]{BDH}, and  
\item Hulek-Craig's curve $X^{{\rm HC}}$ is isomorphic over $\Q$ to $X(\G_B)^{{\rm alg}}$ whose isomorphism is given by {\rm (\ref{HC})}. In particular, Hulek-Craig's curve is isomorphic over $\Q$ to 
a canonical model of $X(\G_B)$ {\rm(}see \cite{KM} for canonical models of modular curves{\rm)}.
\end{enumerate} 
\end{thm}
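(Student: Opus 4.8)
The plan is to dispatch part (1) by citing \cite[Proposition 2.9]{BDH}, which already supplies an explicit isomorphism $B\stackrel{\sim}{\lra}X^{{\rm HC}}$ over $\Q(\zeta_5)$; all the real work lies in part (2), where I must exhibit an isomorphism \emph{over $\Q$} between Hulek-Craig's curve and the canonical model $X(\G_B)^{{\rm alg}}$. The organizing idea is the theory of canonical curves. Both $X^{{\rm HC}}$ and $X(\G_B)^{{\rm alg}}$ are smooth projective geometrically connected non-hyperelliptic curves of genus $4$, so each is cut out in $\mathbb{P}^3$ by its canonical embedding, and that embedding is pinned down by a basis of $H^0(\,\cdot\,,\Omega^1)$ up to the action of $\GL_4$. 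Hence it suffices to write an explicit basis of holomorphic differentials on each curve and to match them by a single linear change of coordinates.

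For $X(\G_B)^{{\rm alg}}$ the differentials $f_i(\tau)\,d\tau$ attached to the cusp forms $f_1,\dots,f_4$ of (\ref{qexp}) are, by construction, the basis realizing the embedding defining (\ref{X}). For $X^{{\rm HC}}$ the adjoint differentials $v_1,\dots,v_4$ of \cite[Lemma 2.6]{BDH}, each of the form $w_i\,dX/F_Y(X,Y,1)$ with $w_i$ a cubic in $X=x_1/z_1,\ Y=y_1/z_1$, provide the corresponding basis. I would then compute $q$-expansions of the $v_i$ from the theta parametrization $x_1=\theta_{0,0}(0,\sigma)$, $y_1=\theta_{\frac{1}{5},0}(0,\sigma)$, $z_1=\theta_{\frac{2}{5},0}(0,\sigma)$ with $\sigma=25(2\tau+1)$ recalled above, and read off the matrix $A$ with $(v_1,v_2,v_3,v_4)=(f_1,f_2,f_3,f_4)A\,(dq)$. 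Since the basis $(v_1,\dots,v_4)A^{-1}$ then has precisely the $q$-expansions $(f_1,\dots,f_4)\,(dq)$, the canonical embedding of $X^{{\rm HC}}$ in this basis agrees with the one defining $X(\G_B)^{{\rm alg}}$, and a canonical embedding of a non-hyperelliptic curve is an isomorphism onto its image. To make this effective, I clear the common factor $dX/F_Y$ to pass from $v_i$ to $w_i$, apply $A^{-1}$ to $(w_1,w_2,w_3,w_4)$, and homogenize in $[x_1:y_1:z_1]$; this yields the explicit map $\psi$ of (\ref{HC}). Two direct checks then finish part (2): substituting the formulas for $x,y,z,w$ into the quadric and cubic of (\ref{X}) and reducing modulo $F(x_1,y_1,z_1)$ confirms that $\psi$ lands on $X(\G_B)^{{\rm alg}}$, and exhibiting the inverse (or verifying that the degree is $1$) shows $\psi$ is birational, hence an isomorphism of smooth projective curves. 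As every coefficient occurring is rational, the isomorphism is defined over $\Q$; and since the target basis consists of the cusp forms $f_i$, $X(\G_B)^{{\rm alg}}$ is a canonical model of $X(\G_B)$, giving the final ``in particular''.

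The main obstacle is analytic-computational rather than structural. The delicate point is aligning the two uniformizing parameters before any $q$-expansions may be equated: the parametrization of $X^{{\rm HC}}$ involves the substitution $\tau\mapsto 25(2\tau+1)$ and weight-$\tfrac12$ theta constants, so one must verify that the variable $q=e^{2\pi\sqrt{-1}\tau}$ governing the $v_i$ genuinely matches the one governing the $f_i$. This is exactly where the abstract isomorphism $X^{{\rm HC}}(\C)\simeq X(\G_B)\simeq B(\C)$ enters, ensuring that the two canonical models differ by \emph{some} element of $\GL_4(\C)$; the $q$-expansion computation then pins this element down as the explicit $A$, which moreover turns out to be rational. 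The conceptual subtlety worth stressing is precisely this descent to $\Q$: although part (1) requires the fifth root of unity $\zeta_5$, the matrix $A$, and hence $\psi$, have rational coefficients, so this second isomorphism is unexpectedly defined over $\Q$—a feature, as noted beneath (\ref{HC}), that is not visible from the construction in \cite{H}. The residual verifications are routine Gr\"obner-basis manipulations.
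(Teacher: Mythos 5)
Your proposal matches the paper's argument essentially step for step: part (1) is delegated to \cite[Proposition 2.9]{BDH}, and part (2) is obtained by comparing the adjoint differentials $v_1,\dots,v_4$ of \cite[Lemma 2.6]{BDH} with the cusp-form basis $f_1,\dots,f_4$ via their $q$-expansions, extracting the rational change-of-basis matrix $A$, and passing to the explicit map $\psi$ of (\ref{HC}) through $[x:y:z:w]=[w_1:w_2:w_3:w_4]A^{-1}$. The paper does exactly this, including the observation that the rationality of $A$ is what yields the (not a priori obvious) descent of the isomorphism to $\Q$, so your proof is correct and takes the same route.
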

Thus, $B$ inherits two algebraic Belyi functions 
such that one is defined  over $\Q(\zeta_5)$ and non-regular (not a Galois covering) while 
the other is defined over $\Q$ and regular (a Galois covering) (see \cite{G} for the 
definition of regular Belyi functions).

\end{document}